\theoremstyle{plain}
\newtheorem{thm}{Theorem}[section]
\newtheorem{theorem}[thm]{Theorem}
\newtheorem{lemma}[thm]{Lemma}
\newtheorem{cor}[thm]{Corollary}
\theoremstyle{remark}  
\newtheorem{remark}[thm]{Remark}
\newtheorem*{ackn}{Acknowledgments}
\theoremstyle{definition}  
\newcommand{\aaa}{\boldsymbol{a}}
\newcommand{\bbb}{\boldsymbol{b}}
\newcommand{\xxx}{\boldsymbol{x}}
\newcommand{\yyy}{\boldsymbol{y}}
\newcommand{\KK}{\mathbb{K}}
\newcommand{\NN}{\mathbb{N}}
\newcommand{\BBB}{\mathcal{B}}
\begin{document}

\title[On the determinant of multiplication map]{On the determinant of multiplication map of a monomial complete intersection ring}
\keywords{Lefschetz property; non-Lefschetz locus; Jacobi--Trudi formula; Schur polynomial.}

\author[Y. Numata]{Yasuhide NUMATA}
\thanks{The author was partially supported by JSPS KAKENHI Grant Number JP18K03206.}
\address{Department of Mathematics, Shinshu University, Matsumoto, Japan.}
\email{nu@math.shinshu-u.ac.jp}
\begin{abstract}
In this article, we consider the monomial complete intersection algebra $\mathbb{K}[x,y]/\langle x^d,y^q\rangle$ in two variables.
For elements $l_1,\ldots,l_{d+q-2k}$ of degree $1$,
we give a formula of the deteminant of linear map
  from the homogeneous component of degree $k$ to the homogenous component of degree $d+q-k$ defined by the multiplication of $l_1 \cdots l_{d+q-2k}$.
\end{abstract}

\maketitle

\section{Introduction}
Roughly speaking,
the strong Lefschetz property for an algebra
is an analogue of the property for
the cohomology ring of compact K\"ahler variety
known as Hard Lefschetz Theorem
(see also \cite{MR3112920, MR3161738}).
We say that
a graded Artinian Gorenstein algebra $R=\bigoplus_{k=0}^{s} R_k$
with socle degree $s$
has
the strong Lefschetz property with a Lefschetz element $l\in R_1$
if
the multiplication map
$\times l^{s-2k}\colon R_k \ni f \mapsto fl^{s-2k} \in R_{s-k} $
is bijective for any $0\leq k\leq s/2$.
For some algebras with the strong Lefschetz property,
characterization of Lefschetz elements is studied,
e.g. \cite{MR3789915,MR2817446}.
The determinant of the linear map
$\times l^{s-2k}\colon R_k\to  R_{s-k} $
is also studied, e.g. \cite{MR2459403, 1812.07199, 1904.01800}.

Consider the monomial complete intersection ring
\begin{align*}
R=\KK[x_1,\ldots,x_n]/\braket{x_1^{d_1},\ldots,x_1^{d_n}}.
\end{align*}
Then the algebra $R$ has the strong Lefschetz property
(see also \cite{MR1136655, MR578321, MR951211}).
For example,
$l=x_1+\cdots+x_n$ is a Lefschetz element for the algebra.
In \cite{MR2459403},
the determinant of
multiplication map of $l^{s-2k}$
is calculated
in the case where $d_1=\cdots=d_n=1$.
In the case where $n=2$,
the determinants are implicitly given in \cite{MR1701596}.
In this article,
we generalize the problem to the multiplication map of
a product of $l_1,\ldots,l_{s-2k}\in R_1$.
We consider
the determinant of the linear map
$\times l_1\cdots l_{s-2k}\colon R_k\ni f\mapsto f\cdot ( l_1\cdots l_{s-2k}) \in R_{s-2k} $
for $l_1,\ldots,l_{s-2k}\in R_1$.
In the case where $n=2$,
the determinant can be written with Schur polynomials.

This article is organized as follows:
In Section \ref{sec:notation},
we recall notation and facts of symmetric polynomials and determinants.
In Section \ref{sec:rep},
we calculate the determinant of the multiplication map.

\begin{ackn}
The author thanks anonymous referees for valuable suggestions.
\end{ackn}

\section{Notation and Formulae}
\label{sec:notation}
In this section,
 we recall notation and facts which will be used in Section \ref{sec:rep}.

We call  
a squence $\lambda=(\lambda_1,\lambda_2,\ldots)$ of
weakly decreasing nonnegative intergers
\emph{a partiotion of $m$}
if $\sum_i \lambda_i = m$.
For nonnegative integers $r,l\in\NN$,
$(r^l)$ denotes
the partition 
consisting of $l$ copies of $r$. 
For a parition $\lambda$ of $n$,
define $\widetilde\lambda_j=\Set{i| 1 \leq j \leq \lambda_i}$.
Then $\widetilde\lambda=(\widetilde\lambda_1,\widetilde\lambda_2,\ldots)$
is also a parition of $n$.
We call $\widetilde\lambda$ the \emph{conjugate parition} to $\lambda$.
For example,
the partition $(l^r)$ is the conjugate parition to $(r^l)$.
For partitions $\mu$ and $\lambda$,
we write $\mu\subset \lambda$
to denote that
they satisfy
$\mu_i \leq \lambda_i$ for all $i$.
For $\lambda\subset (r^l)$,
we define $(r^l)\setminus \lambda$
to be the parition $(r-\lambda_{l},r-\lambda_{l-1},\ldots,r-\lambda_{1},0,\ldots)$.


For a partition $\lambda$,
we define the \emph{Schur polynomial} $s_\lambda(\xxx)$ in 
$n$ variables $\xxx=(x_1,\ldots,x_n)$
to be
\begin{align*}
s_\lambda(\xxx)=
\frac
{\det\left((x_j)^{\lambda_i + n -i}\right )_{\substack{i=1,\ldots,n\\j=1,\ldots,n}}} 
{\det\left((x_j)^{n -i}\right )_{\substack{i=1,\ldots,n\\j=1,\ldots,n}}} .
\end{align*}
For a partition $\lambda$ of a nonnegative integer $m$,
the Schur polynomial  $s_\lambda(\xxx)$ is
a homogeneous symmetric polynomial of degree $m$.
For $k$, the Schur polynomial $s_{(1^k)}(\xxx)$
is the $k$-th \emph{elementary symmetric polynomial} $e_k(\xxx)$,
i.e.,
the sum 
\begin{align*}
\sum_{1\leq i_1<i_2<\cdots <i_k \leq n} x_{i_1} x_{i_2}\cdots  x_{i_k}
\end{align*}
of all square-free monomials of 
degree $k$.
If $k>n$ or $k< 0$, then $e_k(\xxx)=0$.
If $k=0$, then $e_0(\xxx)=1$.
It is known that
the Schur polynomial and elementary symmetric polynomial satisfies
\begin{align}
s_{\widetilde\lambda}(\xxx)=\det
\left(e_{\lambda_i + j-i}(\xxx)\right)_{\substack{i=1,\ldots,l\\j=1,\ldots,l}},
\label{eq:JT}
\end{align}
for a parition such that $\lambda_{l+1}=0$.
(See e.g. \cite[Section I.3]{MR1354144}.)

Next we recall the Cauchy--Binet formula
for determinants.
Assume that $m\geq n$.
Let 
$X$ be an $n\times m$ matrix,
$Y$ an $m\times n$ matrix.
For $S\subset\Set{1,2,\ldots,n}$ with $\#S=m$,
$X^S$ (\emph{resp.}\ $Y_S$) denotes the $m\times m$ submatrix
whose rows (\emph{resp.}\ columns) are the rows  (\emph{resp.}\ columns) of $X$
(\emph{resp.}\ $Y$) at indices from $S$.
If the entries of $X$ and $Y$ are elements of a commutative ring,
then we have the equation
\begin{align}
 \det(YX)=
\sum_{S} \det(Y_S)\det(X^S),
\label{eq:CB}
\end{align}
where the sum is over all
subsets  $S\subset\Set{1,2,\ldots,n}$ such that $\#S=m$.
(See e.g. \cite[Section 5.6]{MR2289254}.)

\section{The determinant of representation matrices.}
\label{sec:rep}
Let $\KK$ be a field.
For positive integers $d,q$ with $d\geq q$,
we consider the algebra $R=\KK[x,y]/\braket{x^{d+1},y^{q+1}}$.
The algebra $R$ can be decompose into homogeneous spaces $R_k$
as follows: $R=\bigoplus_{k=0}^{d+q} R_k$.
The set $\BBB_k=\Set{x^{i}y^{k-i}| 0\leq i \leq d,\ 0\leq k-i \leq q}$
is a $\KK$-basis for the homogeneous space $R_k$.
Hence we have
\begin{align*}
  \dim_{\KK} R_k =
  \begin{cases}
    k+1 & (0\leq k\leq q)\\
    q+1 & (q\leq k \leq d)\\
    s-k +1 & (q \leq k \leq d+q).
  \end{cases}
\end{align*}
Let $k\leq \frac{d+q}{2}$.
Then $\dim_{\KK} R_k = \dim_{\KK} R_{d+q-k}$.
Let
$l_1=a_1x+b_1y,l_2=a_2x+b_2y,\ldots \in R_1\setminus\Set{0}$.
For $l_t$, we define a linear map
\begin{align*}
  \times l_t\colon R_{k+t-1} \ni f \mapsto f\cdot l_t \in R_{k+t}.
\end{align*}
Then we obtain the following sequence of linear maps:
\begin{align*}
  R_k\xrightarrow{\times l_1}
  R_{k+1}\xrightarrow{\times l_{2}}
  \cdots
  \xrightarrow{\times l_{d+q-2k}}
  R_{d+q-k}.
\end{align*}
First we calculate the representation matrix of the linear map
\begin{align*}
  \times l_1\cdot l_2\cdots l_{u}\colon R_{k}\ni f \mapsto f \cdot  l_1\cdot l_2\cdots l_{u} \in R_{k+u}.
\end{align*}
with respect to the bases $\BBB_{k}$ and $\BBB_{k+u}$.
\begin{lemma}
  Assume that $\beta=b_1b_2\cdots b_u\neq 0$.
  Let $l_1=a_1x+b_1y,l_2=a_2x+b_2y,\ldots,l_u=a_ux+b_uy$.
  Then the coefficient of 
  $x^{w+u-i}y^{i}$
  in
  $x^{w-j}y^{j}\cdot l_1 l_2\cdots l_u$
  is
 $\beta e_{u+j-i} \left(\frac{\aaa}{\bbb}\right)$,
where  
$\frac{\aaa}{\bbb}=\left(\frac{a_1}{b_1},\frac{a_2}{b_2},\ldots,\frac{a_u}{b_u}\right)$.
\end{lemma}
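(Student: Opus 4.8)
The plan is to perform the whole computation in the polynomial ring $\KK[x,y]$ and simply read off the coefficient at the end; passing to the quotient $R=\KK[x,y]/\braket{x^{d+1},y^{q+1}}$ only sets certain monomials to zero, and since the defining relations are monomial, reduction never creates new copies of a surviving basis monomial, so the coefficient of any fixed $x^{a}y^{b}$ is the same in $R$ as in $\KK[x,y]$. First I would expand the product
\[
l_1 l_2 \cdots l_u \;=\; \prod_{t=1}^{u}\bigl(a_t x + b_t y\bigr)
\]
by distributivity: each factor contributes either $a_t x$ or $b_t y$, so selecting the $x$-term from a subset $T\subset\{1,\ldots,u\}$ and the $y$-term from its complement produces the monomial $x^{\#T}y^{u-\#T}$ with coefficient $\prod_{t\in T}a_t \prod_{t\notin T}b_t$. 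Collecting all subsets of a fixed size $m$ shows that the coefficient of $x^{m}y^{u-m}$ in $l_1\cdots l_u$ is
\[
\sum_{\substack{T\subset\{1,\ldots,u\}\\ \#T=m}} \prod_{t\in T}a_t \prod_{t\notin T}b_t .
\]

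Next I would factor $\beta=b_1\cdots b_u$ out of every summand. This is precisely the step that uses the hypothesis $\beta\neq 0$: dividing each selected $b_t$ into the numerator lets me rewrite the coefficient as $\beta\sum_{\#T=m}\prod_{t\in T}\tfrac{a_t}{b_t}=\beta\,e_m\!\left(\tfrac{\aaa}{\bbb}\right)$, recognizing the sum of products over $m$-element subsets as the $m$-th elementary symmetric polynomial in the variables $\tfrac{a_t}{b_t}$. Hence
\[
l_1 l_2 \cdots l_u \;=\; \beta \sum_{m=0}^{u} e_m\!\left(\tfrac{\aaa}{\bbb}\right) x^{m} y^{u-m}
\]
in $\KK[x,y]$.

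Finally I would multiply through by the monomial $x^{w-j}y^{j}$ and match exponents. The term indexed by $m$ contributes $\beta\,e_m\!\left(\tfrac{\aaa}{\bbb}\right)x^{\,w-j+m}\,y^{\,j+u-m}$, and this equals $x^{\,w+u-i}y^{\,i}$ exactly when $w-j+m=w+u-i$, that is $m=u+j-i$ (equivalently, matching the $y$-exponent, $j+u-m=i$, which gives the same value of $m$). Substituting $m=u+j-i$ yields the claimed coefficient $\beta\,e_{u+j-i}\!\left(\tfrac{\aaa}{\bbb}\right)$.

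The argument is elementary throughout, so there is no serious obstacle; the only points that need genuine care are the index bookkeeping in the last step — checking that the $x$-exponent and $y$-exponent conditions are consistent and both force $m=u+j-i$ — and the observation that the assumption $\beta\neq 0$ is exactly what makes each $\tfrac{a_t}{b_t}$, and therefore $e_{u+j-i}\!\left(\tfrac{\aaa}{\bbb}\right)$, well defined.
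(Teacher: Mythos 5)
Your proof is correct and follows essentially the same route as the paper: expand the product, identify the coefficient of $x^m y^{u-m}$ as $\beta\,e_m\!\left(\frac{\aaa}{\bbb}\right)$, and match exponents to get $m=u+j-i$. The only cosmetic difference is that the paper factors $\beta$ out of the product first and then invokes the generating-function expansion $\prod_t\bigl(\frac{a_t}{b_t}x+y\bigr)=\sum_i e_{u-i}\bigl(\frac{\aaa}{\bbb}\bigr)x^{u-i}y^i$, whereas you expand over subsets first and factor $\beta$ afterwards.
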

\begin{proof}
  Since $b_i \neq 0$ for all $i$, we have
  \begin{align*}
    l_1 l_2\cdots l_u
    &=(a_1 x+ b_1y)(a_2 x+ b_2y)\cdots (a_u x+ b_uy)\\
    &=\beta \cdot\left(\frac{a_1}{b_1} x+ y\right)\left(\frac{a_2}{b_2} x+ y\right)\cdots \left(\frac{a_u}{b_u} x+ y\right)\\
    &=\beta\cdot \sum_{i} e_{u-i}
\left(\frac{\aaa}{\bbb}\right)
    x^{u-i}y^{i}.
  \end{align*}
  Hence
  \begin{align*}
    x^{w-j}y^{j}\cdot l_1 l_2\cdots l_u
    &=
\beta\cdot
\sum_{i} e_{u-i}
\left(\frac{\aaa}{\bbb}\right)
    x^{w-j+u-i}y^{i+j}\\
    &=
\beta\cdot \sum_{i} 
e_{u+j-i}
\left(\frac{\aaa}{\bbb}\right)
    x^{w+u-i}y^{i}.
  \end{align*}
\end{proof}
 
Now we calculate the determinant $D_{d,q}(a_1,\ldots,a_{d+q-2k};b_1,\ldots,b_{d+q-2k})$
of the linear map
\begin{align*}
  \times l_1\cdot l_2\cdots l_{d+q-2k}\colon R_{k}\ni f \mapsto f \cdot  l_1\cdot l_2\cdots l_{d+q-2k} \in R_{d+q-k}.
\end{align*}
\begin{theorem}[Main Theorem]
  \label{thm:repInGeneral}
  Let $l_1=a_1x+b_1y,l_2=a_2x+b_2y,\ldots ,l_{d+q-2k}=a_{d+q-2k}x+b_{d+q-2k}y \in R_1\setminus\Set{0}$.
Let $i$ be a permutation on $\Set{1,2,\ldots,d+q-2k}$.
Assume that 
 $\check \beta=  b_{i_1}b_{i_2}\cdots b_{i_u} \neq 0$, and that
 $\hat \alpha=  a_{i_{u+1}}a_{i_{u+1}}\cdots a_{i_{d+q-2k}}\neq 0$.
Let
\begin{align*}
\aaa&=(a_1,\ldots,a_{d+q-2k}),
&
\bbb&=(b_1,\ldots,b_{d+q-2k}),\\
 \frac{\check{\aaa}}{\check{\bbb}} &=
  \left(
  \frac{a_{i_1}}{b_{i_1}},
  \frac{a_{i_2}}{b_{i_2}},
  \ldots,
  \frac{a_{i_u}}{b_{i_u}}
  \right) ,
&
 \frac{\hat{\bbb}}{\hat{\aaa}} &=
  \left(
  \frac{b_{i_{u+1}}}{a_{i_{u+1}}},
  \frac{b_{i_{u+2}}}{a_{i_{u+2}}},\ldots,
  \frac{b_{i_{d+q-2k}}}{a_{i_{d+q-2k}}}
  \right).
\end{align*}
    
If $q\leq k\leq\frac{q+d}{2}$, then
\begin{align*}
D_{d,q}(\aaa;\bbb)
=
\hat \alpha^{q+1} \check \beta^{q+1}
  \cdot
  s_{((q+1)^u)}
  \left(
  \frac{\check\aaa}{\check\bbb}
  \right)
  s_{((q+1)^{d+q-2k-u})}
  \left(
  \frac{\hat\bbb}{\hat\aaa}
  \right).
\end{align*}

If $0\leq k\leq k+u \leq q$,
then 
\begin{align*}
D_{d,q}(\aaa;\bbb) &=
\hat \alpha^{k+1} \check \beta^{k+1}
  \cdot
  \sum_{\lambda\subset(u^{k+1})} 
  s_{\widetilde\lambda}
  \left(
  \frac{\check\aaa}{\check\bbb}
  \right)
  s_{\widetilde{(d^{k+1})\setminus\lambda}}
  \left(
  \frac{\hat\bbb}{\hat\aaa}
  \right).
  \end{align*}

If $0\leq k \leq q \leq d \leq k+u$, then
\begin{align*}
D_{d,q}(\aaa;\bbb) &=
\hat \alpha^{k+1} \check \beta^{k+1}
  \cdot
  \sum_{\lambda\subset(u^{k+1})} 
  s_{\widetilde{(d^{k+1})\setminus\lambda}}
  \left(
  \frac{\check\aaa}{\check\bbb}
  \right)
  s_{\widetilde\lambda}
  \left(
  \frac{\hat\bbb}{\hat\aaa}
  \right).
  \end{align*}

If $k\leq q\leq k+u \leq d$, then
\begin{align*}
D_{d,q}(\aaa;\bbb)=
\hat \alpha^{k+1} \check \beta^{k+1}
  \cdot
  \sum_{\lambda\subset((q-k)^{k+1})} 
  s_{\widetilde\lambda}
  \left(
  \frac{\check\aaa}{\check\bbb}
  \right)
  s_{\widetilde{(d^{k+1})\setminus\lambda}}
  \left(
  \frac{\hat\bbb}{\hat\aaa}
  \right).
  \end{align*}
\end{theorem}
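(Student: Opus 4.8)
The plan is to evaluate the determinant by realising the multiplication map as a \emph{composition} of two multiplications and applying the Cauchy--Binet formula \eqref{eq:CB}, reading off each minor as a Schur polynomial through the Jacobi--Trudi identity \eqref{eq:JT}. First I would use the chosen permutation $i$ to split the product according to which coefficient is nonzero, writing $l_{i_s}=b_{i_s}\bigl(\tfrac{a_{i_s}}{b_{i_s}}x+y\bigr)$ for $s\le u$ and $l_{i_s}=a_{i_s}\bigl(x+\tfrac{b_{i_s}}{a_{i_s}}y\bigr)$ for $s>u$, so that
\[
l_1\cdots l_{d+q-2k}=\check\beta\,\hat\alpha\,P\,Q,\qquad P=\prod_{s=1}^{u}\Bigl(\tfrac{a_{i_s}}{b_{i_s}}x+y\Bigr),\quad Q=\prod_{s=u+1}^{d+q-2k}\Bigl(x+\tfrac{b_{i_s}}{a_{i_s}}y\Bigr).
\]
The scalar $\check\beta\hat\alpha$ factors out of the determinant as $(\check\beta\hat\alpha)^{N}$ with $N=\dim_{\KK}R_k$, which accounts for the prefactor $\hat\alpha^{\bullet}\check\beta^{\bullet}$ (exponent $q+1$ when $q\le k$, and $k+1$ when $k\le q$). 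The map then factors as $R_k\xrightarrow{\times P}R_{k+u}\xrightarrow{\times Q}R_{d+q-k}$. By the Lemma the matrix $M_P$ of $\times P$ with respect to $\BBB_k,\BBB_{k+u}$ has entries of the form $e_{u+j-i}(\check\aaa/\check\bbb)$, and running the same computation with the roles of $x$ and $y$ interchanged shows the matrix $M_Q$ of $\times Q$ has entries of the form $e_{\bullet}(\hat\bbb/\hat\aaa)$; both are matrices of elementary symmetric polynomials, which is exactly the input \eqref{eq:JT} needs.

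Next I would apply \eqref{eq:CB} to $\det(M_QM_P)=\sum_S\det((M_Q)_S)\det((M_P)^S)$, where $S$ runs over the $N$-subsets of the index set of $\BBB_{k+u}$; this is legitimate since $p:=\dim_{\KK}R_{k+u}\ge N$ by the unimodality and symmetry of the Hilbert function of $R$. Each maximal minor of $M_P$ (resp.\ $M_Q$) is a determinant of elementary symmetric polynomials, hence by \eqref{eq:JT} equals a Schur polynomial $s_{\widetilde\lambda}(\check\aaa/\check\bbb)$ (resp.\ a Schur polynomial in $\hat\bbb/\hat\aaa$), where the partition $\lambda$ is read off from the positions of $S$. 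The four cases of the theorem correspond precisely to the four regimes for the intermediate dimension $p$: when $q\le k$ one has $q\le k+u\le d$, so $p=q+1=N$, the sum has the single term $S=\{0,\dots,q\}$, and $\det(M_QM_P)$ is the product of the two full $(q+1)\times(q+1)$ determinants (the first case); when $k\le q$ one has $N=k+1$ and $p$ equals $k+u+1$, $d+q-(k+u)+1$, or $q+1$ according as $k+u\le q$, $d\le k+u$, or $q\le k+u\le d$, producing the three summation formulas. The summation index $\lambda$ is confined to the box $(u^{k+1})$ in the first two of these and to $((q-k)^{k+1})$ in the third, while which Schur factor carries $\lambda$ and which carries the complementary partition in $(d^{k+1})$ is dictated by whether $k+u$ lies below $q$ or above $d$.

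The main obstacle is the combinatorial bookkeeping in this last step: making the bijection between an $N$-subset $S\subset\{0,1,\dots,p-1\}$ and a partition $\lambda$ explicit and compatible with both matrices simultaneously. For $M_P$ the dictionary $\lambda_b=u+b-1-i'_b$ (writing $S=\{i'_1<\dots<i'_N\}$) is routine to verify and identifies $\det((M_P)^S)$ with $s_{\widetilde\lambda}(\check\aaa/\check\bbb)$, with the ranges forced by $\BBB_k$ and $\BBB_{k+u}$ cutting $\lambda$ down to exactly the stated box. The delicate point is the \emph{same} subset $S$ for $M_Q$: because the entries of $M_Q$ increase in the $x$-direction, its minor is in the ``anti-Jacobi--Trudi'' orientation, so one must transpose and reverse the selected columns before applying \eqref{eq:JT}, and it is precisely this reversal that is expected to convert the index data of $S$ into the conjugate of the box-complement $(d^{k+1})\setminus\lambda$. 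Confirming that this reversal yields the correct box and the correct reflected factor in each regime, and that the subsets falling outside the stated box contribute vanishing Schur polynomials, is where the full force of the argument lies; the rest is the assembly of these pieces through \eqref{eq:CB}.
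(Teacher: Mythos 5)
Your proposal follows essentially the same route as the paper: factor the map through $R_{k+u}$, write both representation matrices via the Lemma (with the roles of $x$ and $y$ interchanged for the second factor), apply Cauchy--Binet, and convert each complementary pair of minors into $s_{\widetilde\lambda}$ and $s_{\widetilde{(d^{k+1})\setminus\lambda}}$ by Jacobi--Trudi after exactly the vertical/horizontal flip you describe. The only cosmetic difference is that the paper derives the case $d\le k+u$ from the case $k+u\le q$ via the symmetry $D_{d,q}(\aaa;\bbb)=D_{q,d}(\bbb;\aaa)$ rather than treating it as a separate dimension regime.
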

\begin{proof}
Since $R$ is a commutative algebra,
we can assume 
$i_t=t$ without loss of generality.
In this case,
\begin{align*}
 \frac{\check{\aaa}}{\check{\bbb}} &=
  \left(
  \frac{a_{1}}{b_{1}},
  \frac{a_{2}}{b_{2}},
  \ldots,
  \frac{a_{u}}{b_{u}}
  \right) ,
&
 \frac{\hat{\bbb}}{\hat{\aaa}} &=
  \left(
  \frac{b_{u+1}}{a_{u+1}},
  \frac{b_{u+2}}{a_{u+2}},\ldots,
  \frac{b_{d+q-2k}}{a_{d+q-2k}}
  \right),
\\
 \check \beta &= b_1b_2\cdots b_u,
&\hat \alpha&=a_{u+1}a_{u+2}\cdots a_{d+q-2k}.
\end{align*}

In the case where
$q\leq k\leq\frac{q+d}{2}$,
 the bases $\BBB_k$, $\BBB_{k+u}$ and $\BBB_{d+q-k}$ are 
  \begin{gather*}
    \Set{x^{k}y^{0},x^{k-1}y^{1},\ldots, x^{k-q}y^{q}},\\
    \Set{x^{k+u}y^{0},x^{k+u-1}y^{1},\ldots, x^{k+u-q}y^{q}},\\
    \Set{x^{d+q-k}y^{0},x^{d+q-k-1}y^{1},\ldots, x^{d-k}y^{q}},
  \end{gather*}
  respectively.
  Hence the representation matrix $X$
  for $\times l_1\cdots l_u\colon R_k \to R_{k+u}$
  is
  \begin{align*}
    \left(
    \check \beta  
    e_{u+j-i}
    \left(\frac{\check\aaa}{\check\bbb}\right)
    \right)_{\substack{i=0,1,\ldots,q\\ j=0,1,\ldots,q}},
  \end{align*}
  and the representation matrix $Y$
  for  $\times l_{u+1}\cdots l_{d+q-2k}\colon R_{k+u} \to R_{d+q-k}$
  is
  \begin{align*}
    \left(
    \hat \alpha 
    e_{d+q-2k-u+j-i}
    \left(\frac{\hat\bbb}{\hat\aaa}\right)
    \right)_{\substack{i=0,1,\ldots,1\\ j=0,1,\ldots,q}}.
  \end{align*}
  Hence
\begin{align*}
  D_{d,q}(\aaa;\bbb)
  =&\det(Y)\det(X) \\
  =&
  \check \beta^{q+1}
s_{((q+1)^{u})}
  \left(\frac{\check\aaa}{\check\bbb}\right)
  \cdot
  \hat \alpha^{q+1}
s_{((q+1)^{d+q-2k-u})}
  \left(\frac{\hat\bbb}{\hat\aaa}\right).
\end{align*}

Next we consider the case where $0\leq k\leq k+u \leq q$.
In this case,
the bases $\BBB_k$, $\BBB_{k+u}$ and $\BBB_{d+q-k}$ are 
\begin{gather*}
    \Set{x^{k}y^{0},x^{k-1}y^{1},\ldots, x^{0}y^{k}},\\
    \Set{x^{k+u}y^{0},x^{k+u-1}y^{1},\ldots, x^{0}y^{k+u}},\\
    \Set{x^{d}y^{q-k},x^{d-1}y^{q-k+1},\ldots, x^{d-k}y^{q}},
\end{gather*}
respectively.
  Hence the representation matrix $X$
  for $\times l_1\cdots l_u\colon R_k \to R_{k+u}$
  is
  \begin{align*}
    \left(
    \check \beta 
    e_{u+j-i}
    \left(\frac{\check\aaa}{\check\bbb}\right)
    \right)_{\substack{i=0,1,\ldots,k+u\\ j=0,1,\ldots,k}},
  \end{align*}
  and the representation matrix $Y$
  for  $\times l_{u+1}\cdots l_{d+q-2k}\colon R_{k+u} \to R_{d+q-k}$
  is
  \begin{align*}
    \left(
    \hat \alpha 
    e_{d+q-2k-u+j-i}
    \left(\frac{\hat\bbb}{\hat\aaa}\right)
    \right)_{\substack{i=q-k,q-k+1,\ldots,q\\ j=0,1,\ldots,k+u}}
&=
    \left(
    \hat \alpha 
    e_{d-k-u+k-i}
    \left(\frac{\hat\bbb}{\hat\aaa}\right)
    \right)_{\substack{i=0,1,\ldots,k\\ j=0,1,\ldots,k+u}}.
  \end{align*}
To calculate $\det(YX)$, 
we consider minors $\det(X^{\Set{\delta_0,\ldots,\delta_k}})$ and
 $\det(Y_{\Set{\delta_0,\ldots,\delta_k}})$ for
$0\leq \delta_0<\delta_1<\cdots < \delta_k\leq k+u$.
Let $\lambda_{i}=u-\delta_i+i$,
and $\mu_{k-j}=d-u+\delta_j-j$.
Since $0\leq \delta_0<\delta_1<\cdots < \delta_k\leq k+u$,
it follows that
$\lambda=(\lambda_0,\lambda_1,\ldots,\lambda_k,0,\ldots)$
and 
$\mu=(\mu_0,\mu_1,\ldots,\mu_k,0,\ldots)$
are paritions satisfying 
$\lambda\subset (u^{k+1})$ and $\mu=(d^{k+1})\setminus\lambda$.
The minor $\det(X^{\Set{\delta_0,\ldots,\delta_k}})$
 is equal to
  \begin{align*}
\det \left(
    \check \beta
    e_{u+j-i}
    \left(\frac{\check\aaa}{\check\bbb}\right)
    \right)_{\substack{i=\delta_0,\delta_1,\ldots,\delta_k\\   j=0,1,\ldots,k}}
    &=    
    \check \beta^{k+1} 
    \det\left(
    e_{u+j-\delta_i}
    \left(\frac{\check\aaa}{\check\bbb}\right)
    \right)_{\substack{i=0,1,\ldots,k\\ j=0,1,\ldots,k}}\\
    &=    
    \check \beta^{k+1} 
    \det\left(
    e_{\lambda_i+j-i}
    \left(\frac{\check\aaa}{\check\bbb}\right)
    \right)_{\substack{i=0,1,\ldots,k\\ j=0,1,\ldots,k}}.
  \end{align*}
  It follows from \eqref{eq:JT} that 
  \begin{align*}
   \det(X^{\Set{\delta_0,\ldots,\delta_k}})
=
    \check \beta^{k+1} 
    s_{\widetilde\lambda}
    \left(\frac{\check\aaa}{\check\bbb}\right).
  \end{align*}
  On the other hand,
 the minor $Y_{\Set{\delta_0,\ldots,\delta_k}}$  is
  \begin{align*}
    \det \left(
    \hat \alpha 
   e_{d-k-u+k-i}
    \left(\frac{\hat\bbb}{\hat\aaa}\right)
    \right)_{\substack{i=0,1,\ldots,k\\   j=\delta_0,\delta_1,\ldots,\delta_k}}
&   =    
    \hat \alpha^{k+1} 
    \det \left(
    e_{d-u-k+\delta_j-i}
    \left(\frac{\hat\bbb}{\hat\aaa}\right)
    \right)_{\substack{i=0,1,\ldots,k\\ j=0,1,\ldots,k}}\\
&   =    
    \hat \alpha^{k+1} 
    \det \left(
    e_{\mu_{k-j}+j-i}
    \left(\frac{\hat\bbb}{\hat\aaa}\right)
    \right)_{\substack{i=0,1,\ldots,k\\ j=0,1,\ldots,k}}.
  \end{align*}
  By flipping vertically and horizontally,
  we have 
  \begin{align*}
   \det(Y_{\Set{\delta_0,\ldots,\delta_k}})
   &   =    
    \hat \alpha^{k+1} 
    \det \left(
    e_{\mu_{j}+i-j}
    \left(\frac{\hat\bbb}{\hat\aaa}\right)
    \right)_{\substack{i=0,1,\ldots,k\\ j=0,1,\ldots,k}}.
  \end{align*}
  It follows from \eqref{eq:JT} formula that
  \begin{align*}
   \det(Y_{\Set{\delta_0,\ldots,\delta_k}})
   &   =    
    \hat \alpha^{k+1} 
    s_{\widetilde\mu}
    \left(\frac{\hat\bbb}{\hat\aaa}\right).
  \end{align*}
  Therefore it follows from \eqref{eq:CB} formula
  that
  \begin{align*}
    \det(YX)
    &=
   \sum_{0\leq \delta_0<\delta_1<\cdots < \delta_k\leq k+u} \det(Y_{\Set{\delta_0,\ldots,\delta_j}})\det(X^{\Set{\delta_0,\ldots,\delta_j}})\\
    &=
    (\hat\alpha\check\beta)^{k+1} 
    \sum_{\lambda\subset(u^{k+1})} 
    s_{\widetilde\mu}
    \left(\frac{\hat\bbb}{\hat\aaa}\right)
    s_{\widetilde\lambda}
    \left(\frac{\check\aaa}{\check\bbb}\right),   
  \end{align*}
  where 
  $\mu=(d^{k+1})\setminus \lambda$.

Next we consider the case where $0\leq k\leq q$ and $d \leq k+u$.
Since
$D_{d,q}(\aaa;\bbb)=D_{q,d}(\bbb;\aaa)$,
we can obtain the formula in this case
from the result for the case where $0\leq k\leq k+u \leq q$.

Finally we consider the case where $k\leq q\leq k+u \leq d$.
In this case,
the bases $\BBB_k$, $\BBB_{k+u}$ and $\BBB_{d+q-k}$ are 
\begin{gather*}
    \Set{x^{k}y^{0},x^{k-1}y^{1},\ldots, x^{0}y^{k}},\\
    \Set{x^{k+u}y^{0},x^{k+u-1}y^{1},\ldots, x^{k+u-q}y^{q}},\\
    \Set{x^{d}y^{q-k},x^{d-1}y^{q-k+1},\ldots, x^{d-k}y^{q}},
\end{gather*}
respectively.
  Hence the representation matrix $X$
  for $\times l_1\cdots l_u\colon R_k \to R_{k+u}$
  is
  \begin{align*}
    \left(
    \check \beta 
    e_{u+j-i}
    \left(\frac{\check\aaa}{\check\bbb}\right)
    \right)_{\substack{i=0,1,\ldots,q\\ j=0,1,\ldots,k}},
  \end{align*}
  and the representation matrix $Y$
  for  $\times l_{u+1}\cdots l_{d+q-2k}\colon R_{k+u} \to R_{d+q-k}$
  is
  \begin{align*}
    \left(
    \hat \alpha 
    e_{d+q-2k-u+j-i}
    \left(\frac{\hat\bbb}{\hat\aaa}\right)
    \right)_{\substack{i=q-k,q-k+1,\ldots,q\\ j=0,1,\ldots,q}}
&=
    \left(
    \hat \alpha 
    e_{d-k-u+j-i}
    \left(\frac{\hat\bbb}{\hat\aaa}\right)
    \right)_{\substack{i=0,1,\ldots,k\\ j=0,1,\ldots,q}}.
  \end{align*}

  Hence we obtain a formula similar to the case where $0\leq k\leq k+u \leq q$.
  For $0\leq \delta_0<\delta_1<\cdots < \delta_k\leq q$,
  we obtain a partition $\lambda$ by  $\lambda_{i}=u-\delta_{i}+i$.
  In this case, $\lambda$ is a partition contained by $((q-k)^{k+1})$.
  Hence we obtain the formula.
\end{proof}

As corollary to Theorem \ref{thm:repInGeneral},
we have an explicit formula for the determinant in a special case.
\begin{cor}
  \label{thm:rep}
  Assume that $\beta=b_1b_2\cdots b_{d+q-2k}\neq 0$.
 Let
\begin{align*}
 \aaa&=(a_1,\ldots,a_{d+q-2k}),
&
 \bbb&=(b_1,\ldots,b_{d+q-2k}),
&
 \frac{\aaa}{\bbb}&=\left(\frac{a_1}{b_1},\frac{a_2}{b_2},\ldots,\frac{a_{d+q-2k}}{b_{d+q-2k}}\right).
\end{align*}
If $0\leq k \leq q$, then
\begin{align*}
 D_{d,q}(\aaa;\bbb)=
     \beta^{k+1}
s_{((k+1)^{d-k})}
  \left(\frac{\aaa}{\bbb}\right).
\end{align*}
If $q\leq k \leq \frac{d+q}{2}$,
then
\begin{align*}
 D_{d,q}(\aaa;\bbb)=
    \beta^{q+1}
s_{((q+1)^{d+q-2k})}
  \left(\frac{\aaa}{\bbb}\right). 
\end{align*}
\end{cor}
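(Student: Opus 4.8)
The plan is to specialize Theorem~\ref{thm:repInGeneral} to the situation in which \emph{every} coefficient $b_1,\dots,b_{d+q-2k}$ is nonzero, which is exactly the hypothesis $\beta=b_1\cdots b_{d+q-2k}\neq 0$. In that case one may take the permutation to be the identity and place every linear form into the ``check'' group, i.e.\ $u=d+q-2k$. Then the ``hat'' group is empty, so $\hat\alpha=1$, $\check\beta=\beta$, and $\check\aaa/\check\bbb=\aaa/\bbb$. With this choice the intermediate degree is $k+u=d+q-k$, so the representation matrix of $\times\, l_1\cdots l_{d+q-2k}\colon R_k\to R_{d+q-k}$ is already square, of size $\dim_\KK R_k=\dim_\KK R_{d+q-k}$; no Cauchy--Binet summation is needed, and the determinant is simply the determinant of this single matrix, computed in the same way as the matrices $X$ appearing in the proof of Theorem~\ref{thm:repInGeneral}.

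Concretely, I would read this matrix off from the Lemma. Writing $l_t=b_t\!\left(\tfrac{a_t}{b_t}x+y\right)$, the Lemma gives the coefficient of the relevant target monomial in $x^{k-j}y^{j}\cdot l_1\cdots l_{d+q-2k}$ as $\beta\,e_{(d+q-2k)+j-i}\!\left(\aaa/\bbb\right)$. Factoring $\beta$ out of each of the $\dim_\KK R_k$ columns yields
\begin{align*}
D_{d,q}(\aaa;\bbb)=\beta^{\dim_\KK R_k}\,\det\!\left(e_{(d+q-2k)+j-i}\!\left(\aaa/\bbb\right)\right),
\end{align*}
where the row index $i$ and column index $j$ range over the index sets dictated by the bases $\BBB_{d+q-k}$ and $\BBB_k$. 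The remaining task is to identify this determinant of elementary symmetric polynomials with a Schur polynomial via the dual Jacobi--Trudi formula~\eqref{eq:JT}.

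The two regimes differ only in the size of the matrix, hence in the resulting rectangular shape. If $0\le k\le q$, then $\dim_\KK R_k=k+1$; after shifting the target index so that both indices run over $\{0,\dots,k\}$, the entry shift becomes $(d-k)+j-i$, and comparing with $\det\!\left(e_{\nu_i+j-i}\right)$ shows $\nu=((d-k)^{\,k+1})$, whence~\eqref{eq:JT} gives $s_{\widetilde\nu}=s_{((k+1)^{\,d-k})}(\aaa/\bbb)$ and $D_{d,q}=\beta^{k+1}s_{((k+1)^{d-k})}(\aaa/\bbb)$. If $q\le k\le\frac{d+q}{2}$, then $\dim_\KK R_k=q+1$, the shift stays $(d+q-2k)+j-i$ on $\{0,\dots,q\}$, so $\nu=((d+q-2k)^{\,q+1})$ and~\eqref{eq:JT} gives $D_{d,q}=\beta^{q+1}s_{((q+1)^{d+q-2k})}(\aaa/\bbb)$. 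This second regime is in fact immediate from the first case of Theorem~\ref{thm:repInGeneral} with $u=d+q-2k$, since then $\hat\alpha^{q+1}=1$ and the empty-variable factor $s_{((q+1)^{0})}$ equals $1$.

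The main obstacle I anticipate is purely bookkeeping: correctly reading the index ranges of $\BBB_k$ and $\BBB_{d+q-k}$ in each regime and aligning the constant shift $(d+q-2k)+j-i$ (respectively $(d-k)+j-i$) with the Jacobi--Trudi index $\nu_i+j-i$, so that the correct rectangular partition $((k+1)^{d-k})$ or $((q+1)^{d+q-2k})$ is produced. Once the matrix is written down, the conjugation $\widetilde{((d-k)^{k+1})}=((k+1)^{d-k})$ and the application of~\eqref{eq:JT} are routine.
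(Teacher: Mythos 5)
Your proof is correct, and since the paper states Corollary~\ref{thm:rep} without any written proof, your direct derivation is exactly the missing argument: take $u=d+q-2k$, so every form sits in the ``check'' group, the representation matrix of $\times\,l_1\cdots l_{d+q-2k}\colon R_k\to R_{d+q-k}$ is a single square matrix read off from the Lemma, and its determinant is a Schur polynomial by \eqref{eq:JT}. Your index bookkeeping checks out in both regimes: for $0\le k\le q$ the target basis $\BBB_{d+q-k}$ has $y$-exponents running from $q-k$ to $q$, the shift becomes $(d-k)+j-i$ on $\{0,\ldots,k\}$, and $\widetilde{((d-k)^{k+1})}=((k+1)^{d-k})$; for $q\le k\le\frac{d+q}{2}$ both index sets are $\{0,\ldots,q\}$ and one gets $((q+1)^{d+q-2k})$ (for instance, $d=q=2$, $k=1$ gives $\beta^2 s_{(2)}(\aaa/\bbb)=a_1^2b_2^2+a_1a_2b_1b_2+a_2^2b_1^2$, which agrees with a hand computation). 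One point in your favor worth making explicit: for $0\le k\le q$ the applicable displayed case of Theorem~\ref{thm:repInGeneral} is the third one (since $k+u=d+q-k\ge d$), and substituting an empty ``hat'' group there literally collapses the sum to $\lambda=\emptyset$ and produces $s_{\widetilde{(d^{k+1})}}=s_{((k+1)^{d})}$ rather than $s_{((k+1)^{d-k})}$; these disagree already in the example above. So your choice to recompute the square matrix directly from the Lemma and \eqref{eq:JT}, rather than to substitute into that displayed formula, is not just a convenience but the correct route, and it reproduces in miniature the computation used inside the proof of the main theorem without needing the Cauchy--Binet step.
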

In the case where $d=q$,
the defining ideal of the ring is symmetric.
Hence we have the following formula:
\begin{cor}
  \label{cor:schur}
Assume 
that $\alpha=a_1a_2 \cdots a_{2m}\neq 0$
and that $\beta=b_1b_2 \cdots b_{2m}\neq 0$.
Let 
\begin{align*}
\left(\frac{\aaa}{\bbb}\right)
&=
 \left(\frac{a_1}{b_1},\frac{a_2}{b_2},\ldots,\frac{a_{2m}}{b_{2m}}\right),
&
\left(\frac{\bbb}{\aaa}\right)
&=  \left(\frac{b_1}{a_1},\frac{b_2}{a_2},\ldots,\frac{b_{2m}}{a_{2m}}\right) .
\end{align*}
Then the following equation holds:
\begin{align*}
  \beta^{r}
s_{(r^{m})}
\left(\frac{\aaa}{\bbb}\right)
  =
  \alpha^{r}
 s_{(r^{m})}
\left(\frac{\bbb}{\aaa}\right).
\end{align*}
\end{cor}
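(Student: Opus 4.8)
The plan is to exploit the $x\leftrightarrow y$ symmetry of the ring that is available precisely when $d=q$, together with the two-variable formula of Corollary~\ref{thm:rep}. The interchange of the variables $x$ and $y$ identifies $\KK[x,y]/\braket{x^{d+1},y^{q+1}}$ with $\KK[x,y]/\braket{x^{q+1},y^{d+1}}$ and carries each linear form $l_t=a_tx+b_ty$ to $b_tx+a_ty$; this is the relation $D_{d,q}(\aaa;\bbb)=D_{q,d}(\bbb;\aaa)$ already invoked in the proof of Theorem~\ref{thm:repInGeneral}. When $d=q$ it specializes to the self-duality $D_{d,d}(\aaa;\bbb)=D_{d,d}(\bbb;\aaa)$, and the proof then amounts to computing each side of this single equation by means of Corollary~\ref{thm:rep}.

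First I would choose the parameters so that the rectangular partition $(r^m)$ in $2m$ variables is produced. Given $r\geq 1$ and $m\geq 1$, set $d=q=r+m-1$ and $k=r-1$; then the number $d+q-2k$ of linear forms equals $2m$, one has $d-k=m$ and $k+1=r$, and the inequalities $0\leq k\leq q$ hold, so the first branch of Corollary~\ref{thm:rep} is the relevant one on both sides. Evaluating the left-hand side gives $D_{d,d}(\aaa;\bbb)=\beta^{\,k+1}s_{((k+1)^{d-k})}(\aaa/\bbb)=\beta^{\,r}s_{(r^m)}(\aaa/\bbb)$. Evaluating the right-hand side means applying the same formula after interchanging $\aaa$ and $\bbb$: the product $\beta$ of the $y$-coefficients is replaced by the product $\alpha$ of the $x$-coefficients, and the argument $\aaa/\bbb$ by $\bbb/\aaa$, whence $D_{d,d}(\bbb;\aaa)=\alpha^{\,r}s_{(r^m)}(\bbb/\aaa)$. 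Equating the two evaluations via the self-duality yields the asserted identity.

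The work here is bookkeeping rather than computation, so the only thing to watch is that the chosen data are admissible. One must confirm that $d=q=r+m-1$ and $k=r-1$ are nonnegative and satisfy $0\leq k\leq q$ (which is immediate for $r,m\geq 1$), and that both substitutions land in the case $0\leq k\leq q$ of Corollary~\ref{thm:rep} rather than in the range $q\leq k\leq\frac{d+q}{2}$. The boundary cases $r=0$ or $m=0$ reduce $(r^m)$ to the empty partition, so that both Schur polynomials equal $1$ and the identity degenerates to the trivial $1=1$; these should be disposed of separately, since the construction $d=r+m-1$ is only meaningful once $r+m\geq 1$.

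As an alternative I would note that the statement is in fact a pure symmetric-function identity, independent of the ring: inside the $r\times 2m$ rectangle the partition $(r^m)$ is self-complementary, so the standard relation $s_{(r^m)}(x_1^{-1},\dots,x_{2m}^{-1})=(x_1\cdots x_{2m})^{-r}s_{(r^m)}(x_1,\dots,x_{2m})$, evaluated at $x_i=a_i/b_i$, gives the formula at once. I expect the symmetry argument to be the intended one, as it explains transparently why the hypothesis $d=q$ is exactly what makes the determinant self-dual.
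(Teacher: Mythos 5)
Your proposal is correct and follows essentially the same route as the paper: specialize to $d=q$, apply Corollary~\ref{thm:rep} to the two families $a_tx+b_ty$ and $b_tx+a_ty$, and equate the results using the $x\leftrightarrow y$ symmetry of the ideal $\braket{x^{d+1},y^{d+1}}$; your choice $d=q=r+m-1$, $k=r-1$ is just the explicit inverse of the paper's substitution $2m=2d-2k$, $r=k+1$. Your closing observation that the identity is a self-complementary Schur function identity is exactly the content of the Remark following the corollary in the paper.
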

\begin{proof}
  Consider the case where $d=q$.
  By Corollary \ref{thm:rep},
  the determinant 
  for $a_1x+b_1y,\ldots,a_{2d-2k}x+b_{2d-2k}y$ is
  \begin{align*}
    D_{d,d}(\aaa;\bbb)=
    \beta^{k+1}
s_{((k+1)^{d-k})}
    \left(\frac{\aaa}{\bbb}\right).
  \end{align*}
  On the other hand,
  the determinant 
  for $b_1x+a_1y,\ldots,b_{2d-2k}x+a_{2d-2k}y$ is
  \begin{align*}
    D_{d,d}(\bbb;\aaa)=
    \alpha^{k+1}
s_{((k+1)^{d-k})}
    \left(\frac{\bbb}{\aaa}\right).
  \end{align*}
  Since the defining ideal $\braket{x^{d+1},y^{d+1}}$ of the ring $R$ is symmetric,
  these determinants are the same.
  Hence we have
  \begin{align*}
    \beta^{k+1}
s_{((k+1)^{d-k})}
    \left(\frac{\aaa}{\bbb}\right)
    =
    \alpha^{k+1}
s_{((k+1)^{d-k})}
    \left(\frac{\bbb}{\aaa}\right).
  \end{align*}
  Let $2m=2d-2k$ and $r=k+1$. Then we have the equation.
\end{proof}
\begin{remark}
For  $\lambda\subset (r^n)$ and $\mu=(r^n)\setminus \lambda$,
the Schur polynomials satisfy the following equation 
(see \cite[Exercise 7.41]{MR1676282}):
  \begin{align}
    (y_1y_2 \cdots y_{n})^{r}
    \cdot s_{\lambda}
    \left(\frac{\xxx}{\yyy}\right)
    =
    (x_1x_2 \cdots x_{n})^{r}
    \cdot s_{\mu}
    \left(\frac{\yyy}{\xxx}\right),
\label{eq:EC2}
  \end{align}
where $\frac{\xxx}{\yyy}=\left(\frac{x_1}{y_1},\frac{x_2}{y_2},\ldots,\frac{x_{n}}{y_{n}}\right)$, 
and
$\frac{\yyy}{\xxx}=\left(\frac{y_1}{x_1},\frac{y_2}{x_2},\ldots,\frac{y_{n}}{x_{n}}\right)$.
  Let $\lambda=\mu=(r^m)$ and $n=2m$.
  Then $\mu=(r^n)\setminus \lambda$.
  In this case,
  Equation \eqref{eq:EC2} is
  the equation in Corollary \ref{cor:schur}.
\end{remark}

\bibliography{by-mr}
\bibliographystyle{amsplain-url} 
\end{document}